\newtheorem{definitionEng}{Definition}[section]
\newtheorem{theorem}{Theorem}[section]
\newtheorem{corollary}{Corollary}[section]
\newtheorem{lemma}{Lemma}[section]
\newtheorem{remark}{Remark}[section]
\newtheorem{remarks}{Remarks}[section]
\newtheorem{example}{Example}[section]
\newcommand\R{\mathbb{R}}
\newcommand\N{\mathbb{N}}
\newcommand{\D}{\, \mbox{d}}
\newcommand{\norm}[1]{\left\Vert #1\right\Vert}							
\begin{document}

\title{\bf Convergence Rate of Nonlinear Switched Systems}
\author{Philippe \textsc{JOUAN} and Sa\"id \textsc{NACIRI}\footnote{LMRS, CNRS UMR 6085, Universit\'e
    de Rouen, avenue de l'universit\'e BP 12, 76801
    Saint-Etienne-du-Rouvray France. E-mail: Said.Naciri1@univ-rouen.fr and Philippe.Jouan@univ-rouen.fr}}

\date{\today}

\maketitle

\begin{abstract}
 This paper is concerned with the convergence rate of the solutions of nonlinear switched systems.

We first consider a switched system which is asymptotically stable for a class of inputs but not for all inputs. We show that solutions corresponding to that class of inputs converge arbitrarily slowly
to the origin.

Then we consider analytic switched systems for which a common weak quadratic 
Lyapunov function exists. Under two different sets of assumptions we provide explicit exponential convergence rates for inputs with a fixed dwell-time.

\vskip 0.8cm

Keywords: Switched systems; Asymptotic stability; Weak Lyapunov functions; Convergence rate.

\vskip 0.2cm

AMS Subject Classification: 93C30, 93D20, 93D30, 34D23.

\end{abstract}

\vskip 1cm


\section{Introduction}

Switched systems combine continuous dynamics with discrete events \cite{L03}, and have attracted growing interest in recent years. In particular, problems related to the stability of switched systems have received a lot of attention \cite{L03,LM99}. Among these issues the convergence rate of solutions is of capital importance in practical situations. Indeed, knowing that the solutions converge to the origin is not enough if this information does not come with an estimate of the rate of decay of the solutions. 

A switched system is a parametrized family of vector fields together with a law selecting at each time which vector field is responsible for the evolution of the system. This law is in general a piecewise constant and right-continuous function of time and is called an input or switching signal. For these inputs a solution of the switched system is merely a concatenation of integral curves of the different vector fields. 
It is a well-known fact that switching among asymptotically stable vector fields can lead to trajectories that escape to infinity \cite{LM99}. On the other hand switching adequately between unstable vector fields may lead to trajectories that converge to the origin. In \cite{LM99} Liberzon and Morse identify three basic problems related to the stability of switched systems. These problems are:
\begin{enumerate}
	\item Find conditions guaranteeing that the switched system is asymptotically stable for all inputs;
	\item Identify the largest class of switching signals for which the switched system is asymptotically stable;
	\item If the vector fields are not asymptotically stable, find at least one switching signal that makes the switched system asymptotically stable.
\end{enumerate}
In the papers \cite{BJ11} and \cite{JN13} (see also \cite{BM05,SVR11}) systems asymptotically stable for some class of inputs but not for all inputs are identified. For these systems two questions naturally arise. Does it exist a common convergence rate for all inputs of the class under consideration? If the answer is negative is it possible to determine some convergence rate depending of a parameter, for instance a dwell-time?

The paper is an attempt to answer these questions. It is organized as follows: after having recalled some basic facts in Section \ref{SectionBasicdef} we show in Section \ref{SectionNoconvergence} that if the asymptotic stability restricts to some class of inputs, for instance dwell-time inputs or nonchaotic ones \cite{BJ11,JN13}, then the solutions corresponding to these switching signals converge arbitrarily slowly to the origin.

Then we turn our attention to switched systems defined by a set of analytic vector fields that share a common weak Lyapunov function (this is the framework of the previous papers \cite{BJ11,BJN14,JN13,RSD10,SVR11}). In Section \ref{SectionHomogeneous} we consider homogeneous vector fields and quadratic weak Lyapunov functions and we prove exponential stability, with a computable convergence rate, for all inputs with a fixed dwell-time. In Section \ref{SectionGeneral} this result is extended to general non linear switched systems under the assumptions that the differentials of the vector fields at the origin are Hurwitz and that the Hessian of the Lyapunov function is positive definite. Some partial results about linear systems can be found in \cite{JN15}.

\section{Basic definitions}\label{SectionBasicdef}

\subsection{The switched system}

Let $\{f_1,\ldots,f_p\}$ be a finite family of smooth vector fields of $\mathbb{R}^d$. Here and subsequently we assume the origin to be a common
equilibrium of the vector fields.\\
Consider the switched system
	$$\dot{x}=f_{u}(x),\quad x\in\mathbb{R}^d,\quad u\in\{1,\ldots,p\}.$$
An input, or switching law, is a piecewise constant and right-continuous function $t\longmapsto u(t)$ from $[0,+\infty)$ to $\{1,\ldots,p\}$.
Fixing such a switching signal gives rise to a non autonomous differential equation
	$$\dot{x}=f_{u(t)}(x),\quad x\in\mathbb{R}^d.$$
Let us denote by $\Phi_u^t(x)$, $t\geq0$, the solution of the switched system for the input $u$ and for the initial condition $x\in\R^d$.
Let also $\Phi_i^t(x)$ stand for the flow of $f_i$ which corresponds to the case where the input is constant and equal to $i$.
The inputs being piecewise constant a solution of the switched system is nothing but a concatenation of integral curves of the vector fields $f_i$, $i=1,\ldots,p$.
In the literature, one may encounter inputs that are assumed to be merely measurable. In this context, a solution is an absolutely continuous curve that satisfies the differential equation almost everywhere. 

\subsection{Stability of switched systems}

Here we introduce the concept of stability we are interested in and the notion of convergence rate. 

\begin{definitionEng}
A continuous function $\alpha:[0,+\infty) \longrightarrow \mathbb{R}_+$ is said to be of class $\mathcal{K}$ (resp. $\mathcal{K}_\infty$) if 
$\alpha(0)=0$ and it is increasing (resp. $\alpha(0)=0$ and it is increasing to $+\infty$).

A continuous function $\beta:[0,+\infty)\times [0,+\infty)\longrightarrow \mathbb{R}_+$ is said to be of class $\mathcal{KL}$ if for each 
$t\in\mathbb{R}_+$, the function $\beta(\cdot,t)$ is of class $\mathcal{K}_\infty$ and for each $r\in\mathbb{R}_+$ the function $\beta(r,\cdot)$
decreases to $0$.
\end{definitionEng}

\begin{definitionEng}
The switched system is said to be GUAS (Globally Uniformly Asymptotically Stable) if there exists a class $\mathcal{KL}$ function $\beta$ such that
for all inputs $u$ and all initial conditions $x\in\mathbb{R}^d$, 
	$$\forall t\geq0, \quad \|\Phi_u^t(x)\| \leq \beta(\|x\|,t).$$ 
The function $\beta$ provides a \emph{convergence rate} of the solutions to the equilibrium.\\
In particular, if there exist two positive constants $C$ and $\mu$ such that $\beta(r,t)=Cre^{-\mu t}$ then the \emph{convergence rate} is said to be exponential.	
\end{definitionEng}

It is well-known that as soon as an homogeneous system is GUAS the convergence rate is exponential \cite{H04}. Moreover, the GUAS property of homogeneous switched systems is equivalent to the local attractivity of the origin \textit{i.e.} all solutions near the origin converge to zero.  

An equivalent definition of GUAS switched systems can be given in terms of $\varepsilon$ and $\delta$. 

\begin{definitionEng}
The switched system is said to be GUAS if 
	$$\forall \varepsilon, \delta>0, \exists T>0	\quad \text{s.t.} \quad \|x\|\leq\delta \Longrightarrow \forall t\geq T, \quad \|\Phi_u^t(x)\|\leq \varepsilon.$$
\end{definitionEng}

\subsection{Classes of inputs}

In this section, the main classes of inputs that can be found in the literature \cite{H04,BJ11} are introduced. 

\begin{definitionEng}
An input is said to have a \emph{dwell-time} $\delta>0$ if the time between two discontinuities is at least $\delta$.
\end{definitionEng}

An input that presents a finite number of discontinuities has a dwell-time.

\begin{definitionEng}
An input $u$ is said to have an \emph{average dwell-time} if there exist two positive numbers $\delta$ and $N_0$ such that for any $t\geq 0$ the number $N(t,t+\tau)$ of discontinuities of $u$
on the interval $]t,t+\tau[$ satisfies 
		$$N(t,t+\tau)\leq N_0+\dfrac{\tau}{\delta}.$$
The number $\delta$	is called the \emph{average dwell-time} and $N_0$ the \emph{chatter bound}.	
\end{definitionEng}

\begin{definitionEng}
An input $u$ is said to have a \emph{persistent dwell-time} if there exists a sequence of times $(t_k)_{k\geq0}$ increasing to $+\infty$ and two positive numbers $\delta$
and $T$ such that $u$ is constant on $[t_k,t_k+\delta[$ and $t_{k+1}-t_k\leq T$ for all $k\geq0$. 
The constant $\delta$ is called the \emph{persistent dwell-time} and $T$ the \emph{period of persistence}.
\end{definitionEng}

\begin{definitionEng}[See \cite{BJ11}]
An input $u$ is said to be chaotic if there exists a positive constant $\tau$ and a sequence $([t_k,t_k+\tau])_{k\geq0}$ of intervals that satisfies the following conditions:
\begin{enumerate}
	\item $t_k \longrightarrow_{k \to +\infty} +\infty$;
	\item For all $\varepsilon>0$ there exists $k_0\in\N$ such that for all $k\geq k_0$, the input $u$ is
		constant on no subinterval of $[t_k , t_k + \tau ]$ of length greater than or equal to $\varepsilon$.
\end{enumerate}
An input that does not satisfy these conditions is called a nonchaotic input. 
\end{definitionEng}

\begin{definitionEng}[See \cite{BJ11}]
An input $u$ is said to be regular if it is nonchaotic and for each $i\in\{1,\ldots,p\}$ there exist infinitely many disjoint intervals of the same length on which $u$ is equal to $i$.

\end{definitionEng}

\vskip 0,4cm

\noindent It is easy to show the following inclusions between the different classes of inputs: 
\begin{enumerate}
	\item An input that has a dwell-time has an average dwell-time;
	\item An input that has an average dwell-time is nonchaotic;
	\item A nonchaotic input has a persistent dwell-time.
\end{enumerate} 
Notice that all these inclusions are strict.


\section{Systems with arbitrarily slow convergence}\label{SectionNoconvergence}

We are now in a position to state the first main result of this paper. It basically says that if the switched system is globally asymptotically stable for one class of inputs 
 but not GUAS then the corresponding solutions converge arbitrarily slowly to the origin.
 
  This result also shows that the GUAS property is equivalent to the existence
 of a convergence rate for at least one of the classes of inputs defined above. This is moreover the way the theorem is proved.\\

\begin{theorem}
Let $\mathcal{U}$ be one of the classes of inputs listed above.\\
If the switched system is globally asymptotically stable over the class $\mathcal{U}$ but not for all inputs, then there is no
uniform convergence rate for the class $\mathcal{U}$. 
\end{theorem}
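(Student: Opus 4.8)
The plan is to prove the logically equivalent contrapositive statement: \emph{if the class $\mathcal{U}$ admits a uniform convergence rate, then the switched system is GUAS}. Granting this, the theorem follows at once, for if $\mathcal{U}$ admitted a uniform convergence rate while the system failed to be GUAS we would reach a contradiction. So I argue by contradiction, assuming there is a class $\mathcal{KL}$ function $\beta$ with $\norm{\Phi_u^t(x)}\le\beta(\norm{x},t)$ for every input $u\in\mathcal{U}$, every $x\in\R^d$ and every $t\ge 0$, and I aim to show that the \emph{same} $\beta$ controls the trajectories of \emph{all} inputs.

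The lever is a causality observation: since the $f_i$ are smooth, hence locally Lipschitz, solutions are unique, and a solution of the switched system is a concatenation of integral curves; consequently $\Phi_u^t(x)$ depends only on the restriction $u_{|[0,t]}$, so two inputs agreeing on $[0,t]$ have coinciding flows at every instant of $[0,t]$ (as long as these exist). The second ingredient is an extension lemma: for an \emph{arbitrary} input $v$ and any $t>0$, there exists $u\in\mathcal{U}$ with $u_{|[0,t]}=v_{|[0,t]}$. This is where the specific class enters. Because inputs are piecewise constant, $v$ has only finitely many discontinuities on $[0,t]$, so it suffices to prolong $v_{|[0,t]}$ beyond $t$ by a tail that places the whole signal in $\mathcal{U}$.

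Constructing the tail is elementary in every case. Making $v$ constant after $t$ yields a signal with finitely many switches, hence one possessing a dwell-time; since a dwell-time input is automatically of average dwell-time, of persistent dwell-time and nonchaotic, this single construction settles all these classes at once. For the class of regular inputs one instead appends after $t$ a tail cycling through the values $1,\dots,p$ on consecutive intervals of a fixed length $\ell$: the resulting signal still has a dwell-time (so is nonchaotic) and visits each symbol on infinitely many disjoint intervals of equal length, hence is regular. With the extension lemma in hand, fix any input $v$, any $x$ and any $t\ge 0$, and choose $u\in\mathcal{U}$ agreeing with $v$ on $[0,t]$. The a priori bound $\norm{\Phi_u^s(x)}\le\beta(\norm{x},s)$ keeps the $u$-trajectory in a fixed compact ball, so it exists for all time; by causality $\Phi_v^s(x)=\Phi_u^s(x)$ on $[0,t]$, and in particular $\norm{\Phi_v^t(x)}=\norm{\Phi_u^t(x)}\le\beta(\norm{x},t)$. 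As $v$, $x$ and $t$ were arbitrary, $\beta$ is a uniform convergence rate for all inputs, i.e. the system is GUAS, the desired contradiction.

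I expect the only genuine work to be the verification, class by class, that the prolonged signal really belongs to $\mathcal{U}$ — immediate for the dwell-time based classes but requiring the explicit cyclic tail for regular inputs — together with the remark that the a priori $\beta$-bound rules out finite-time blow-up and thereby secures global existence for the comparison input $u$.
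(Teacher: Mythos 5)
Your proposal is correct and follows essentially the same route as the paper: extend the restriction of an arbitrary input to $[0,t]$ by a tail that places the whole signal in $\mathcal{U}$, apply the assumed $\mathcal{KL}$ bound to the extended input, and conclude by causality that the system is GUAS, a contradiction. You are in fact slightly more careful than the paper, whose proof uses only the constant tail (``finitely many switches, hence a dwell-time, so $\overline{u}\in\mathcal{U}$'') and thus glosses over the case $\mathcal{U}=$ regular inputs, which your cyclic tail handles explicitly.
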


\begin{proof}
Let us assume that there exists a class $\mathcal{KL}$ function $\beta$ such that for all $\overline{u}\in\mathcal{U}$ holds: 
	$$\forall x\in\mathbb{R}^d,\ \ \forall t\geq0, \quad \|\Phi_{\overline{u}}^t(x)\|\leq \beta(\|x\|,t).$$
Let $u$ be any piecewise constant input. Let us fix a positive number $T>0$ and let us consider the input $\overline{u}$ defined by:
$$
\overline{u}(t)=\left\{
\begin{array}{ll}
u(t) & \mbox{ if } t\in[0,T)\\
1 & \mbox{ if } t\geq T
\end{array}
\right.$$
 This input has a finite number of switches, hence a dwell-time, so that $\overline{u}\in\mathcal{U}$, and one has for all $x$, 
	$$\forall t\geq0, \quad \|\Phi_{\overline{u}}^t(x)\|\leq \beta(\|x\|,t).$$
It follows that for each $t\in[0,T)$, $\|\Phi_u^t(x)\|\leq \beta(\|x\|,t)$. Since $T$ can be any positive number, we deduce:  
 	$$\forall t\geq0, \quad \|\Phi_u^t(x)\| \leq \beta(t,\|x\|).$$
Since $u$ is arbitrary the switched system is GUAS, a  contradiction.

\end{proof}

\begin{corollary}
If the switched system is GAS over one of the classes previously defined and admits a convergence rate $\beta$ over this class then it is GUAS and $\beta$ is a convergence rate for all measurable inputs.
\end{corollary}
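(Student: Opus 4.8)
The plan is to establish the two assertions separately: that the system is GUAS, and that the rate $\beta$ is valid for every measurable input. The first assertion is the contrapositive content of the Theorem. Under the hypothesis there is a single class $\mathcal{KL}$ function $\beta$ dominating $\|\Phi_{\overline u}^t(x)\|$ for all $\overline u\in\mathcal{U}$, which is precisely the uniform rate whose existence the Theorem forbids unless the system is GUAS. Concretely I would reuse the truncation device from the proof of the Theorem: given an arbitrary piecewise constant input $u$ and a horizon $T>0$, the input equal to $u$ on $[0,T)$ and to the constant value $1$ afterwards has finitely many switches, hence a dwell-time, and therefore belongs to $\mathcal{U}$ whichever of the listed classes $\mathcal{U}$ is (by the stated inclusions every dwell-time input lies in each of them). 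The bound for this truncated input coincides with the bound for $u$ on $[0,T)$, and letting $T\to+\infty$ shows that $\beta$ controls $\|\Phi_u^t(x)\|$ for every piecewise constant $u$; this is the GUAS property.

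The substance of the corollary, and the step I expect to be delicate, is the passage from piecewise constant to measurable inputs. I would argue by approximation. Fix a measurable input $u$, an initial state $x$ and a horizon $T>0$. Since $u$ takes values in the finite set $\{1,\ldots,p\}$, it can be approximated on $[0,T]$ by piecewise constant inputs $u_n:[0,T]\to\{1,\ldots,p\}$ with $u_n\to u$ almost everywhere (for instance via Lusin's theorem). Each $u_n$ has finitely many switches, so the first part yields $\|\Phi_{u_n}^s(x)\|\le\beta(\|x\|,s)$ for every $s\in[0,T]$; in particular all the approximating trajectories remain in the closed ball of radius $R:=\beta(\|x\|,0)$, on which the smooth vector fields $f_1,\ldots,f_p$ are bounded, say by $M$, and admit a common Lipschitz constant $L$.

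It then remains to pass to the limit. Because the $u_n$ are integer valued and converge almost everywhere, for almost every $s$ one has $f_{u_n(s)}=f_{u(s)}$ for all large $n$, so that $\int_0^T \|f_{u_n(s)}(y(s))-f_{u(s)}(y(s))\|\D s\to 0$ by dominated convergence along any fixed bounded curve $y$. Combining this with the uniform bounds $M$ and $L$, the equi-Lipschitz and equibounded family $(\Phi_{u_n}^{\cdot}(x))_n$ has, via Arzel\`a--Ascoli, a uniform limit which one identifies, through the integral form of the equation and a Gronwall estimate, with the unique Carath\'eodory solution $\Phi_u^{\cdot}(x)$; by uniqueness the whole sequence converges uniformly on $[0,T]$. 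Passing to the limit in $\|\Phi_{u_n}^s(x)\|\le\beta(\|x\|,s)$ and using the continuity of $\beta$ gives $\|\Phi_u^s(x)\|\le\beta(\|x\|,s)$ for all $s\in[0,T]$, hence for all $s\ge0$ since $T$ is arbitrary. The main obstacle is exactly this limit: one must keep every trajectory inside the fixed compact ball of radius $R$ so that $M$ and $L$ are independent of $n$, and one must ensure that the limit is the genuine solution of the measurable system (no relaxation occurs here, since the $u_n$ stay $\{1,\ldots,p\}$-valued), which is where the uniqueness of Carath\'eodory solutions for the locally Lipschitz fields $f_i$ is used.
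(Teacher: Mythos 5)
Your argument is correct and follows essentially the same route as the paper: the GUAS claim is exactly the contrapositive of the Theorem (obtained by the same truncation device), and the extension to measurable inputs is proved by approximating the input by piecewise constant ones on each compact interval $[0,T]$, showing the corresponding trajectories converge uniformly, and passing to the limit in the bound. The only divergence is technical: the paper invokes weak-$\ast$ approximation and the resulting uniform convergence of solutions in one line, whereas you use almost-everywhere (strong) approximation of the $\{1,\ldots,p\}$-valued input together with the a priori confinement to the ball of radius $\beta(\|x\|,0)$, Arzel\`a--Ascoli, Gronwall and uniqueness of Carath\'eodory solutions --- a fully detailed, slightly more elementary version of the same step.
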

\begin{proof}
Only the last assertion should be proven. Actually on a compact interval $[0,T]$, any measurable input is a weak-$\ast$ limit of a sequence of piecewise constant inputs, the corresponding solutions from the same initial point are uniformly convergent, and $\beta$ is a convergence rate on that interval. Since $T$ is arbitrary the proof is finished.

\end{proof}

\begin{remark}
The same result holds for the convexified system:
$$
\dot{x}=\sum_{i=1}^{p}v_if_i(x),\quad x\in\mathbb{R}^d,
$$
where $v=(v_1,\dots,v_p)$ satisfies $\sum_{i=1}^{p}v_i=1$ and $v_i\geq 0$ for $i=1,\dots,p$. Indeed the measurable inputs of this system are also  weak-$\ast$ limits of sequences of piecewise constant inputs of the original system, so that the assumption of the corollary also implies that the convexified system is GUAS and that $\beta$ is a convergence rate for all measurable inputs of this system.
\end{remark}


\section{Convergence rate of homogeneous switched systems}\label{SectionHomogeneous}

In this section, we deal with switched systems defined by a finite set $\{f_1,\ldots,f_p\}$ of analytic homogeneous vector fields, that is analytic vector fields that moreover satisfy:
$$
\forall x\in \mathbb{R}^d,\ \ \forall \lambda \in \mathbb{R} \qquad f_i(\lambda x)=\lambda f_i(x).
$$
Clearly this setting includes the linear case.

We assume the vector fields $\{f_1,\ldots,f_p\}$ to be globally asymptotically stable and to share a common, but not strict in general, quadratic Lyapunov function.
More accurately there exists a symmetric positive definite matrix $P$ such that the positive definite function defined on $\R^d$ by $V(x)=x^TPx$ is a weak Lyapunov function for each vector field \textit{i.e.}
		$$\forall i\in\{1,\ldots,p\},\ \  \forall x\in\R^d, \quad \mathcal{L}_{f_i} V(x):=\D V(x)\cdot f_i(x) \leq0.$$
		 
In this context the space $\R^d$ is naturally endowed with the Euclidean, but not canonical, norm defined by $\|x\|_P=\sqrt{x^TPx}$, and $V(\Phi_i^t(x))=\| \Phi_i^t(x)\|_P^2$ is strictly decreasing w.r.t. $t$ for $i=1,\ldots,p$. Indeed the $f_i$'s being analytic and globally asymptotically stable the Lyapunov function (even only weak) is strictly decreasing along all nonzero solutions of each individual vector field. Notice that these assumptions do not imply the GUAS property for all inputs (see Example \ref{example} and \cite{BJ11,JN13}).

Let us state the main result of this section.

\begin{theorem}\label{convrate}
Let $\{f_1,\ldots,f_p\}$ be a finite set of globally asymptotically stable, analytic and homogeneous vector fields assumed to share the common weak Lyapunov function $V(x)=x^TPx$.

Let $\delta$ be a positive number.\\
Then for all inputs with dwell-time $\delta$ the switched system under consideration is exponentially stable with a rate of convergence equal to $\displaystyle \beta(r,t)=r\min\{1,e^{-\frac{\ln(M)}{2}}e^{{\frac{\ln(M)}{2\delta}}t}\}$ where $M=M(\delta)<1$ is the following decreasing function of $\delta$:
$$
M(\delta)=\max_{i=1,\dots ,p}\{\max\{\norm{\Phi_i^\delta(x)}_P;\ \norm{x}_P=1\}\}.
$$
\end{theorem}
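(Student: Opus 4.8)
The plan is to reduce everything to a contraction estimate over blocks of duration $\delta$ and then to count how many such blocks fit into $[0,t]$. Throughout I will use that, by the weak Lyapunov inequality, $t\mapsto\norm{\Phi_i^t(x)}_P$ is non-increasing along each individual flow, and that, as recalled before the statement, $V$ is \emph{strictly} decreasing along nonzero trajectories.

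First I would record two elementary facts. Homogeneity of degree one of each $f_i$ passes to its flow: if $x(\cdot)$ solves $\dot x=f_i(x)$ then $y(\cdot)=\lambda x(\cdot)$ solves the same equation, so $\Phi_i^t(\lambda x)=\lambda\,\Phi_i^t(x)$; hence the definition of $M$ on the unit sphere upgrades to $\norm{\Phi_i^\delta(x)}_P\le M\norm{x}_P$ for every $x\in\R^d$ and every $i$. To see $M<1$ I would use compactness: the sphere $S=\{x:\norm{x}_P=1\}$ is compact, each map $x\mapsto\norm{\Phi_i^\delta(x)}_P$ is continuous, and by strict decrease one has $\norm{\Phi_i^\delta(x)}_P<1$ for every $x\in S$; the maximum of a continuous function over a compact set is attained, hence strictly below $1$, and the maximum over the finitely many indices $i$ preserves the strict inequality. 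For the monotonicity, given $\delta_1<\delta_2$ I would write $\Phi_i^{\delta_2}(x)=\Phi_i^{\delta_2-\delta_1}(\Phi_i^{\delta_1}(x))$ and apply the non-increase of $\norm{\cdot}_P$ to conclude $\norm{\Phi_i^{\delta_2}(x)}_P\le\norm{\Phi_i^{\delta_1}(x)}_P\le M(\delta_1)$ on $S$, whence $M(\delta_2)\le M(\delta_1)$.

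The core is the decay estimate. Fix an input $u$ with dwell-time $\delta$, with switching times $0=\tau_0<\tau_1<\cdots$ and constant value on each $[\tau_k,\tau_{k+1})$ of length $\ge\delta$. On any interval of length at least $\delta$ on which a single field $f_i$ acts, splitting off the first $\delta$ (contraction by $M$) and using the non-increase of $\norm{\cdot}_P$ on the remainder multiplies the norm by at most $M$; iterating along one constant-input piece of length $L$ gives a factor $M^{\lfloor L/\delta\rfloor}$. Concatenating over the constant-input pieces of lengths $L_j$ covering $[0,t]$ yields
$$\norm{\Phi_u^t(x)}_P\le M^{N}\norm{x}_P,\qquad N=\sum_{j}\left\lfloor \frac{L_j}{\delta}\right\rfloor.$$

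It remains to bound $N$ from below, and this is the step I expect to be the main obstacle, since the switching instants need not be multiples of $\delta$ and the last piece reaching $t$ is in general incomplete. The clean way is the elementary inequality $\lfloor s\rfloor\ge s/2$ valid for $s\ge1$, applied with $s=L_j/\delta\ge1$ to each piece of length $\ge\delta$; the only piece that may be shorter than $\delta$ is the last, partial one, which contributes total length less than $\delta$. Summing gives $N\ge\frac{1}{2\delta}\sum_{j:\,L_j\ge\delta}L_j\ge\frac{t-\delta}{2\delta}$, and this is exactly what forces the $2\delta$ in the exponent of the stated rate. Since $M<1$ we then have $M^N\le M^{(t-\delta)/(2\delta)}=e^{-\ln(M)/2}\,e^{(\ln(M)/2\delta)\,t}$, and combining with the trivial bound $\norm{\Phi_u^t(x)}_P\le\norm{x}_P$ produces the minimum in $\beta$, completing the argument.
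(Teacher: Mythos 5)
Your proposal is correct and follows essentially the same route as the paper: reduce to the sphere by homogeneity, get $M<1$ by compactness plus strict decrease of the norm along individual flows, decompose each constant-input piece into $\lfloor L_j/\delta\rfloor$ blocks of length $\delta$ (your $N$ is exactly the paper's $n=\sum_j n_j$), and bound $N\geq\frac{t-\delta}{2\delta}$ — your floor inequality $\lfloor s\rfloor\geq s/2$ for $s\geq 1$ is just a repackaging of the paper's bookkeeping $t=n\delta+\sum_j r_j<(2n+1)\delta$. The only (harmless) addition is your explicit proof that $M(\delta)$ is decreasing, which the paper asserts without argument.
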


\begin{proof}
Let us denote by $\mathcal{S}$ the unit sphere for the norm $\norm{\cdot}_P$. Since we are dealing with homogeneous systems we can assume that the initial condition $x$ belongs to $\mathcal{S}$.

For each $i\in\{1,\ldots,p\}$, let $m_i=\max_{x\in\mathcal{S}} \norm{\Phi_i^\delta(x)}_P$. Since the norm is strictly decreasing along the nonzero solutions of each $f_i$ we have for a fixed $x\in\mathcal{S}$, $\norm{\Phi_i^\delta(x)}_P<1$. By compacity of the unit sphere $\mathcal{S}$ and continuity with respect to the initial condition we get $0<m_i<1$. 

Of course $M=\max_{1\leq i\leq p} m_i$ also satisfies $0<M<1$. 

Let us now consider an input $u$ with dwell-time $\delta$. Let $t$ be a positive number and $0=t_0<t_1<\ldots<t_k<t_{k+1}=t$ be the switching times in the interval $[0,t]$.

For each $j=0,\ldots,k$ we have 
	$$t_{j+1}-t_j=n_j\delta+r_j,$$
where $0\leq r_j<\delta$, $n_k\geq 0$, and $n_j\geq 1$ for $j=0,\dots,k-1$.
Let $\displaystyle n=\sum_{j=0}^k n_j\geq k$ be the number of disjoint intervals of length $\delta$ of $[0,t]$ on which the input $u$ is constant. Summing the previous equalities, we get
	$$t=n\delta+\sum_{i=0}^k r_j<n\delta+(k+1)\delta\leq (2n+1)\delta .$$
That equality can be rewritten:
	$$n\delta>\dfrac{t-\delta}{2},$$	
and we can conclude by homogeneity, and because $\displaystyle \ln(M)<0$, that
\begin{align}
\displaystyle 	\Vert \Phi_u^t(x) \Vert_P &\leq M^n= e^{n\delta \ln(M)/\delta}	\notag \\
\displaystyle &< e^{\frac{t-\delta}{2}\frac{\ln(M)}{\delta}}
\displaystyle =e^{-\frac{\ln(M)}{2}}e^{{\frac{\ln(M)}{2\delta}}t}.	\notag
\end{align}
To finish the norm is nonincreasing along the trajectories, and the convergence rate is consequently:
$$
\beta(r,t)=r\min\{1,e^{-\frac{\ln(M)}{2}}e^{{\frac{\ln(M)}{2\delta}}t}\}.
$$
\end{proof}

\begin{remarks}
\begin{enumerate}
    \item In the previous  proof, if $t$ is a switching time, then we get $n\geq k+1$ and finally $\Vert \Phi_u^t(x) \Vert_P\leq e^{{\frac{\ln(M)}{2\delta}}t}$. Actually the "overshot" $e^{-\frac{\ln(M)}{2}}$ is due to the fact that the decreasing rate is unknown between the last switching time $t_k$ and $t=t_{k+1}$ whenever $t_{k+1}-t_k<\delta$.
	\item If $x\in\R^d\setminus\{0\}$ is such that $\mathcal{L}_{f_i} V(x)=0$ for some $i$ then $\norm{\Phi_i^t(x)}_P=\norm{x}_P+\text{o}(t^k)$ where $k\geq1$ is an odd integer. In that case, the function $M=M(\delta)$ is at most polynomial.
	\item The weak Lyapunov function need not be quadratic. Indeed the properties of the norm used in the proof are analycity and homogeneity, and it is sufficient for the Lyapunov function $V$ to be the square of an analytic norm.
\end{enumerate}
\end{remarks}

\vskip 0.2cm

Next we give an example of a linear switched system which is asymptotically stable for \emph{dwell-time} inputs but not for all ones.

\begin{example}\label{example}
(see \cite{BM05})
Consider the switched system consisting in the two following Hurwitz matrices
	$$B_1=\begin{pmatrix}  0 & -1 \\ 1 & -1\end{pmatrix}$$
and
	$$B_2=\begin{pmatrix} 0 & 1 \\ -1 & -1\end{pmatrix}.$$	
They admit the identity matrix $I$ as a common weak quadratic Lyapunov function. \\
The switched system is not GUAS since the convex combination $\frac{1}{2}(B_1+B_2)$ is not Hurwitz.
However it can be shown (see \cite{BM05,JN13}) that the switched system is asymptotically stable for dwell-time inputs.\\
The following figure shows the graph of the function $M=M(\delta)$. 

\includegraphics[width=13cm]{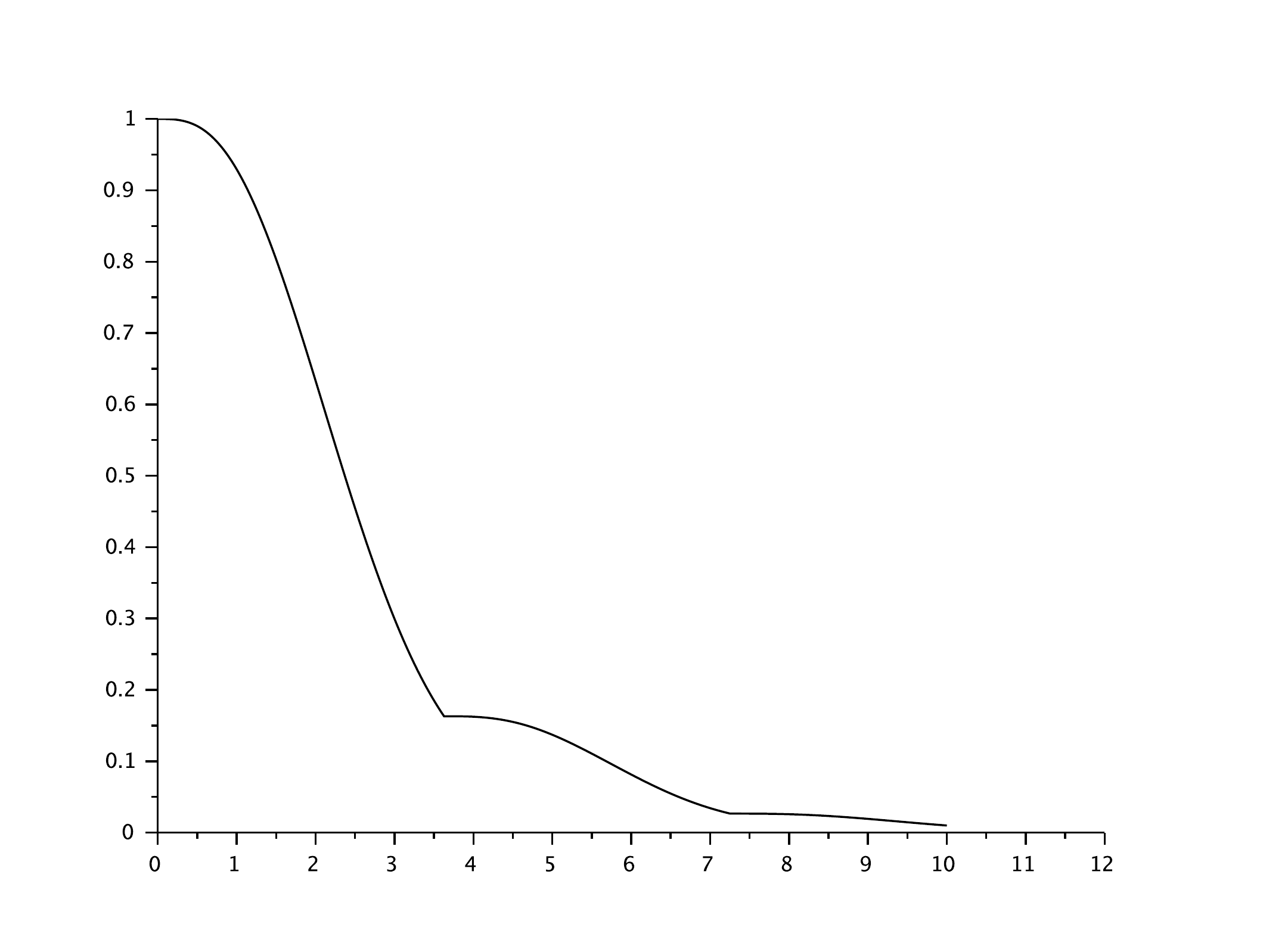}

In the next figure, the graph of the function $\beta(1,\cdot)$ is plotted for different values of the paramater $\delta$. 

\includegraphics[width=13cm]{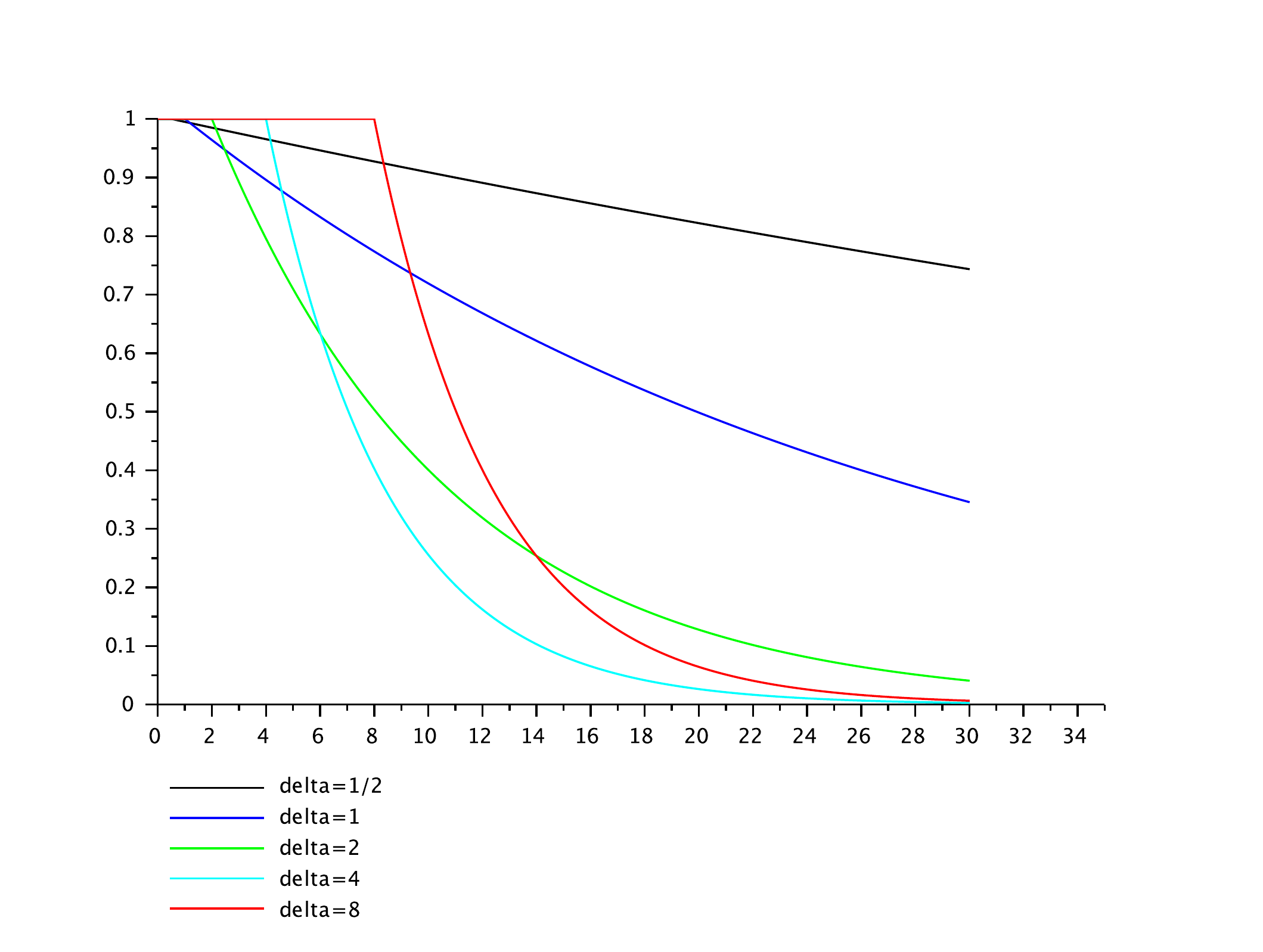}

\end{example}

\vskip 0.2cm


\section{Extension to general nonlinear switched systems}\label{SectionGeneral}

The aim of this section is to extend the previous result to general nonlinear switched systems: we no longer assume the vector fields  $\{f_1,\ldots,f_p\}$ to be homogeneous and the weak Lyapunov function $V$ to be quadratic. More accurately we consider the following setting:
\begin{enumerate}
\item[(i)] the vector fields $\{f_1,\ldots,f_p\}$ are analytic and globally asymptotically stable at the origin;
\item[(ii)] they share the common analytic weak Lyapunov function $V$.
\end{enumerate}
However we are led to add two other assumptions:
\begin{enumerate}
\item[(iii)] The linearization $B_i$ of the vector field $f_i$ at the origin is Hurwitz, for $i=1,\dots, p$. This assumption cannot be avoided. Actually if a central manifold at the origin would exist for one of the vector field, then the convergence for this vector field could be arbitrarily slow.
\item[(iv)] The  Lyapunov function $V$ is proper and its Hessian $H$ at the origin is positive definite. Indeed we need the level sets $\{V\leq r\}$ to be compact, and also to define a norm related to $V$.
\end{enumerate}
We therefore choose the following Euclidean norm on $\R^d$:
$$
\norm{x}_H=\sqrt{\frac{1}{2}x^THx}.
$$
Since the differential of $V$ vanishes at the origin, we get:
$$
V(x)=\frac{1}{2}x^THx+\ \mbox{higher order terms}=\norm{x}_H^2+O(\norm{x}_H^3).
$$

\begin{lemma}\label{normapproximation}
There exists a positive constant $\rho$ such that:
$$
\forall x\in\mathbb{R}^d\ \mbox{ s.t. } \norm{x}_H\leq \rho,\qquad \frac{1}{2}\norm{x}_H^2\leq V(x) \leq 2 \norm{x}_H^2.
$$
\end{lemma}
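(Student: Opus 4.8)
The plan is to promote the Taylor expansion $V(x) = \norm{x}_H^2 + O(\norm{x}_H^3)$ recorded just above the statement into a genuine two-sided estimate, by making the remainder quantitative and then shrinking the neighborhood so that the cubic remainder is dominated by the quadratic leading term. First I would set $R(x) := V(x) - \norm{x}_H^2$. Because $V$ is analytic (hence $C^3$) with $V(0)=0$ and $DV(0)=0$, and because $\norm{x}_H^2 = \tfrac{1}{2}x^T H x$ is exactly the second-order part of $V$ at the origin (its Hessian there being $H$), the function $R$ vanishes to order at least three at the origin. Concretely, applying Taylor's theorem with remainder to $V$ — and using that all norms on $\R^d$ are equivalent, so that $O(\norm{\cdot}^3)$ and $O(\norm{\cdot}_H^3)$ coincide — there exist constants $C>0$ and $\rho_0>0$ with
$$\forall x\in\R^d \ \text{ s.t. }\ \norm{x}_H\leq \rho_0,\qquad |R(x)|\leq C\norm{x}_H^3.$$

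Next I would choose $\rho = \min\{\rho_0,\tfrac{1}{2C}\}$. Then for every $x$ with $\norm{x}_H\leq \rho$ one has $C\norm{x}_H\leq C\rho\leq \tfrac{1}{2}$, whence
$$V(x)=\norm{x}_H^2+R(x)\geq \norm{x}_H^2-C\norm{x}_H^3=\norm{x}_H^2\bigl(1-C\norm{x}_H\bigr)\geq \tfrac{1}{2}\norm{x}_H^2,$$
$$V(x)=\norm{x}_H^2+R(x)\leq \norm{x}_H^2+C\norm{x}_H^3=\norm{x}_H^2\bigl(1+C\norm{x}_H\bigr)\leq \tfrac{3}{2}\norm{x}_H^2\leq 2\norm{x}_H^2,$$
which is precisely the asserted inequality.

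The argument is essentially routine, and the only point needing care is the cubic remainder bound itself. Since the paper already asserts the expansion $V(x)=\norm{x}_H^2+O(\norm{x}_H^3)$, this reduces to recalling that the $O$-symbol encodes exactly a constant $C$ and a neighborhood on which $|R(x)|\leq C\norm{x}_H^3$, so no genuine difficulty remains. The main (modest) obstacle is simply to respect the asymmetry of the two targets: the lower bound $\tfrac{1}{2}\norm{x}_H^2$ is the binding one and dictates the choice $C\rho\leq \tfrac{1}{2}$, after which the upper bound $2\norm{x}_H^2$ follows with room to spare.
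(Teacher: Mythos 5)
Your proof is correct and is exactly the argument the paper intends: the paper's proof is the single line ``Straightforward from $V(x)=\norm{x}_H^2+O(\norm{x}_H^3)$,'' and you have simply made the implicit constant and neighborhood in the $O$-term explicit and chosen $\rho$ so that the cubic remainder is at most half the quadratic term. No discrepancy to report.
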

\begin{proof}
Straightforward from $V(x)=\norm{x}_H^2+O(\norm{x}_H^3)$.
\end{proof}

\begin{lemma}\label{approximation}
The quadratic function $x\longmapsto\norm{x}_H^2$ is a weak Lyapunov function for the $B_i$'s, $i=1,\dots,p$.
\end{lemma}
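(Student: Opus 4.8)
The plan is to reduce the statement to a pointwise inequality between two quadratic forms, and then to transfer the weak Lyapunov inequality $\mathcal{L}_{f_i}V\le 0$ from the pair $(f_i,V)$ to its ``first-order part'' $(B_i,\norm{\cdot}_H^2)$ by a Taylor expansion at the origin followed by a homogeneity (scaling) argument.

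First I would compute the Lie derivative of the candidate function along the linear field $\dot{x}=B_ix$. Since $H$ is symmetric, the differential of $x\mapsto\norm{x}_H^2=\tfrac12 x^THx$ is $x^TH$, so that
$$
\mathcal{L}_{B_i}\!\left(\norm{\cdot}_H^2\right)(x)=x^THB_ix=\tfrac12\,x^T\!\left(HB_i+B_i^TH\right)x=:q_i(x).
$$
Thus the whole lemma amounts to showing that the quadratic form $q_i$ is $\le 0$ on all of $\R^d$. Next I would Taylor-expand the data at the origin: as $f_i$ is analytic with $f_i(0)=0$ and linearization $B_i$, one has $f_i(x)=B_ix+O(\norm{x}_H^2)$, and as $V$ is analytic with $\D V(0)=0$ and Hessian $H$, one has $\D V(x)=x^TH+O(\norm{x}_H^2)$. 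Multiplying these expansions and retaining the lowest-order contribution gives
$$
\mathcal{L}_{f_i}V(x)=\D V(x)\cdot f_i(x)=x^THB_ix+O(\norm{x}_H^3)=q_i(x)+O(\norm{x}_H^3).
$$

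I would then run the scaling argument. The form $q_i$ is homogeneous of degree $2$, so for any fixed $x\neq 0$ and any small $\lambda>0$ the weak Lyapunov hypothesis applied at the point $\lambda x$ yields
$$
0\ge \mathcal{L}_{f_i}V(\lambda x)=\lambda^2 q_i(x)+O(\lambda^3)=\lambda^2\bigl(q_i(x)+O(\lambda)\bigr).
$$
Dividing by $\lambda^2>0$ and letting $\lambda\to 0^+$ forces $q_i(x)\le 0$. Since $x$ is arbitrary, $\mathcal{L}_{B_i}(\norm{\cdot}_H^2)=q_i\le 0$ on $\R^d$, which is exactly the asserted weak Lyapunov property for the linear vector field $B_i$.

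I expect the only delicate point to be the bookkeeping in the second step: one must check that the cross products of the two $O(\norm{x}_H^2)$ remainders against the linear factors contribute only at order $\norm{x}_H^3$ or higher, so that $q_i$ is genuinely the quadratic part of $\mathcal{L}_{f_i}V$. The equivalence of $\norm{\cdot}$ and $\norm{\cdot}_H$ lets me use either norm in the Landau symbols, and the homogeneity of $q_i$ makes the final passage to the limit immediate once the expansion is in place.
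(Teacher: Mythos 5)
Your proof is correct and follows essentially the same route as the paper: identify the quadratic part $x^THB_ix=2\langle x,B_ix\rangle_H$ of $\mathcal{L}_{f_i}V$ via the Taylor expansions of $f_i$ and $\D V$ at the origin, then use the scaling $x\mapsto\lambda x$ with $\lambda\to 0^+$ to conclude that this quadratic form is nonpositive. The only cosmetic difference is that the paper phrases the final step as a contradiction while you divide by $\lambda^2$ and pass to the limit directly.
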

\begin{proof}
For each $f_i$ one has $f_i(x)=B_ix+O(\norm{x}_H^2)$. Since $V$ is a weak Lyapunov function for $f_i$, we get:
$$
\forall x\in\mathbb{R}^d \quad 0\geq \mathcal{L}_{f_i}V(x)=2\langle x,B_ix\rangle_H +O(\norm{x}_H^3),
$$
where $\langle\ ,\ \rangle_H$ stands for the scalar product associated to the Euclidean norm $\norm{.}_H$. Assume that $\langle x,B_ix\rangle_H>0$ for some $x$, then we would have $\mathcal{L}_{f_i}V(\lambda x)=2\langle\lambda x,B_i(\lambda x)\rangle_H +O(\norm{\lambda x}_H^3)>0$ for $\lambda>0$ small enough, a contradiction. Consequently we get the desired inequality:
$$
\forall x\in \mathbb{R}^d \qquad \langle x,B_ix\rangle_H\leq 0.
$$
\end{proof}

\begin{theorem}
Let $\delta$ and $R$ be two positive real numbers. Under the assumptions $(i)$ to $(iv)$ the switched system under consideration is exponentially stable for all inputs with dwell-time $\delta$ and all initial conditions $x\in \{V\leq R\}$: there exist two positive constants $\alpha$ and $\gamma$ (computed in the proof) such that
$$
\forall x\in \{V\leq R\} , \ \ \forall t\geq 0,\qquad V(\Phi_u^t(x))\leq \min\{1,\alpha e^{-\gamma t}\}V(x).
$$
\end{theorem}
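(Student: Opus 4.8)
The plan is to reduce the nonlinear statement to the linear result of Theorem \ref{convrate} near the origin, and then to globalise over the sublevel set $\{V\le R\}$ by a compactness argument. First I would apply Theorem \ref{convrate} to the \emph{linear} switched system generated by $\{B_1,\dots,B_p\}$: these fields are homogeneous and, being Hurwitz, globally asymptotically stable, and by Lemma \ref{approximation} they share the weak quadratic Lyapunov function $x\mapsto\norm{x}_H^2$. Writing $\Psi_i^t$ for the flow of $B_i$, this produces a constant $M_0=M_0(\delta)<1$ such that $\norm{\Psi_i^\delta(x)}_H\le M_0\norm{x}_H$ for every $x$ and every $i$, by homogeneity.

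Next I would transfer this contraction to the nonlinear flows on a small ball. Writing $f_i(x)=B_ix+g_i(x)$ with $\norm{g_i(x)}_H=O(\norm{x}_H^2)$, a Gronwall estimate on the bounded interval $[0,\delta]$ gives $\norm{\Phi_i^\delta(x)-\Psi_i^\delta(x)}_H\le C\norm{x}_H^2$, hence $\norm{\Phi_i^\delta(x)}_H\le(M_0+C\norm{x}_H)\norm{x}_H$. Combining this with the expansion $V(x)=\norm{x}_H^2+O(\norm{x}_H^3)$ recorded before Lemma \ref{normapproximation} (so that the ratio $V(x)/\norm{x}_H^2$ is as close to $1$ as one wishes near $0$), I would obtain a radius $r_1>0$ and a constant $\kappa<1$ with $V(\Phi_i^\delta(x))\le\kappa V(x)$ whenever $V(x)\le r_1$. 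The delicate point is to pick the radius so that both perturbation errors, the $C\norm{x}_H$ term and the deviation of $V$ from $\norm{\cdot}_H^2$, are small enough for the product to stay strictly below $1$; this is possible precisely because $M_0<1$.

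I would then upgrade this to a \emph{uniform} one-step contraction on all of $\{V\le R\}$. For each $i$, consider the ratio $x\mapsto V(\Phi_i^\delta(x))/V(x)$, which is continuous on $\{V\le R\}\setminus\{0\}$. Because each $f_i$ is analytic and globally asymptotically stable, $V$ is \emph{strictly} decreasing along every nonzero solution: otherwise analyticity of $t\mapsto V(\Phi_i^t(x))$ would force it to be constant along a whole trajectory converging to $0$, contradicting that $V$ is positive definite. Hence the ratio is $<1$ everywhere; on the compact annulus $\{r_1\le V\le R\}$ it attains a maximum $<1$, and on $\{V\le r_1\}$ it is bounded by $\kappa$. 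Together these give a single $\kappa'<1$ with $V(\Phi_i^\delta(x))\le\kappa' V(x)$ for all $x\in\{V\le R\}$ and all $i$.

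Finally I would replay the counting argument of Theorem \ref{convrate}. Since $V$ is non-increasing, $\{V\le R\}$ is forward invariant, so a dwell-time-$\delta$ trajectory started there remains there. Partitioning $[0,t]$ into the $n$ disjoint intervals of length $\delta$ on which $u$ is constant, with $n\delta>(t-\delta)/2$, each such interval contracts $V$ by the factor $\kappa'$ while $V$ does not increase elsewhere, whence $V(\Phi_u^t(x))\le(\kappa')^n V(x)\le\alpha e^{-\gamma t}V(x)$ with $\alpha=(\kappa')^{-1/2}$ and $\gamma=-\ln(\kappa')/(2\delta)$; taking the minimum with the trivial bound $V(\Phi_u^t(x))\le V(x)$ yields the claim. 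I expect the main obstacle to be the third step: turning the \emph{weak} Lyapunov inequality into a uniform strict decrease of $V$ over one dwell-time on the compact set $\{V\le R\}$, since it is exactly this uniform strictness, powered by analyticity and the linear comparison near $0$, that converts a merely weak Lyapunov function into a genuine exponential rate.
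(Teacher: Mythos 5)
Your proof is correct and follows the same overall architecture as the paper's: a contraction for the linearized flows obtained from the weak Lyapunov function $\norm{\cdot}_H^2$ of Lemma \ref{approximation}, a perturbation argument transferring it to the nonlinear flows near the origin, a compactness argument on the annulus $\{r_1\le V\le R\}$ using the strict decrease of $V$ along nonzero analytic trajectories, and finally the dwell-time counting of Theorem \ref{convrate}. The one genuine organizational difference is where the two regimes are merged. The paper keeps them separate through the iteration: near the origin it iterates the contraction in the norm $\norm{\cdot}_H$ and only afterwards converts to $V$ via Lemma \ref{normapproximation} (paying a factor $4$), while on the annulus it iterates directly in $V$; it must then glue the two exponential estimates at the end by introducing an intermediate time $\tau$ at which the trajectory crosses from the annulus into the small ball. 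You instead convert both regimes into a \emph{single} uniform one-step bound $V(\Phi_i^\delta(x))\le\kappa' V(x)$ on all of $\{V\le R\}$ before iterating, using forward invariance of $\{V\le R\}$; this makes the counting argument apply in one pass, eliminates the crossing-time step entirely, and avoids the factor $4$, at the modest cost of having to control the ratio $V(y)/\norm{y}_H^2$ at both $x$ and $\Phi_i^\delta(x)$ in the near-origin step (which your expansion handles correctly since $M_0^2<1$ leaves room for the error terms). Both proofs yield explicit constants; yours are marginally cleaner.
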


\begin{proof}
Let $\delta>0$. We will first analyze the behaviour of the system in a neighborhood of the origin, and then in an annulus $r<V(x)<R$.

According to Lemma \ref{approximation}, the square $\norm{.}_H^2$ of the norm is a common quadratic weak Lyapunov function for the linear vector fields $x\longmapsto B_ix$ which are GAS by assumption. Consequently there exists a positive constant $m<1$ such that
$$
\forall x\in \mathbb{R}^d, \ \ \forall i=1,\dots,p \qquad \norm{e^{\delta B_i}x}_H\leq m\norm{x}_H.
$$
Since $B_i$ is the linearization of the vector field $f_i$ at the origin, we have
	$$\Phi_i^\delta(x)=e^{\delta B_i}x+\text{o}(\norm{x}_H),$$
and consequently
	$$\displaystyle \dfrac{\norm{\Phi_i^\delta(x)}_H}{\norm{x}_H} =\dfrac{\norm{e^{\delta B_i}x}_H}{\norm{x}_H}+\dfrac{\text{o}(\norm{x}_H)}{\norm{x}_H}\leq m+\dfrac{\text{o}(\norm{x}_H)}{\norm{x}_H} \longrightarrow_{\norm{x}_H\to 0} m<1.$$
In other words there exists positive constants $m\leq m_1<1$ and $r_1$ such that
$$
\forall x\in \mathbb{R}^d \mbox{ with } \norm{x}_H\leq r_1, \ \ \forall i=1,\dots,p \qquad \norm{\Phi_i^\delta(x)}_H\leq m_1\norm{x}_H.
$$	
With the same computation as in the previous section we get for all inputs $u$ with dwell-time $\delta$:
$$
\forall x\in \mathbb{R}^d \mbox{ with } \norm{x}_H\leq r_1, \  \forall t\geq 0 \qquad \norm{\Phi_u^t(x)}_H\leq e^{-\frac{\ln(m_1)}{2}}e^{{\frac{\ln(m_1)}{2\delta}}t}\norm{x}_H.
$$	
We can assume $r_1\leq \rho$, where $\rho$ is the positive constant defined in Lemma \ref{normapproximation}. Thanks to that lemma the previous inequality entails:
$$
\forall x\in \mathbb{R}^d \mbox{ with } \norm{x}_H\leq r_1, \  \forall t\geq 0 \qquad V(\Phi_u^t(x))\leq 4e^{-\ln(m_1)}e^{{\frac{\ln(m_1)}{\delta}}t}V(x).
$$
(Notice that it is the square of the norm which is involved in the Lemma).

As $V$ is non increasing along the trajectories, we actually obtain $\displaystyle V(\Phi_u^\delta(x))\leq \min\{1, 4e^{-\ln(m_1)}e^{{\frac{\ln(m_1)}{\delta}}t}\}V(x)$.

Now let $r$ be a positive constant such that $\{V< r\}$ be included in the open ball $\{\norm{x}_H<r_1\}$.

Since the annulus $A(r,R)=\{r\leq V\leq R\}$ is compact and $V$ strictly decreasing along the trajectories, there exists a positive constant $m_2<1$ such that:
$$
\forall x\in A(r,R), \ \ \forall i=1,\dots,p \qquad V(\Phi_i^\delta(x))\leq m_2V(x).
$$	
With a computation similar to the one of the previous section we get for all inputs $u$ with dwell-time $\delta$:
$$
\forall x\in A(r,R), \qquad V(\Phi_u^t(x))\leq \min\{1,e^{-\frac{\ln(m_2)}{2}}e^{{\frac{\ln(m_2)}{2\delta}}t}\}V(x)
$$ 
\textbf{as long as $\Phi_u^t(x)$ belongs to $A(r,R)$}.

Let $\alpha=4e^{-\ln(m_1)}e^{-\frac{\ln(m_2)}{2}}$ and $\gamma=\min\{-\frac{\ln(m_1)}{\delta},-\frac{\ln(m_2)}{2\delta}\}$, then we obtain for all inputs $u$ with dwell-time $\delta$
$$
\forall x\in \{V\leq R\} , \ \ \forall t\geq 0,\qquad V(\Phi_u^t(x))\leq \min\{1,\alpha e^{-\gamma t}\}V(x).
$$
In order to state this inequality for a point $x$ such that $\norm{x}_H>r_1$ and for a time $t$ such that $V(\Phi_u^t(x))<r$ we should consider an intermediate time $\tau$ such that $V(\Phi_u^{\tau}(x))\geq r$ and $\norm{\Phi_u^{\tau}(x))}_H\leq r_1$. The other cases are straightforward.

\end{proof}


\section{Conclusion}

In this work we have presented three results related to the convergence rate of switched systems. The first one states that if a (non linear) switched system is asymptotically stable for some class $\mathcal{U}$ of inputs but not GUAS then the solutions converge to the origin arbitrarily slowly.

For the second and third results we consider switched systems for which a common weak quadratic 
Lyapunov function exists, and we provide explicit convergence rates for inputs with a fixed dwell-time.

The aim of a future work will be to extend these last two results from inputs with dwell-time to nonchaotic ones.

\newpage
	

\bibliographystyle{plain} 
\bibliography{Bibliographie}

\end{document}